\definecolor{Black}{cmyk}{0,0,0,1}
\definecolor{OrangeRed}{cmyk}{0,0.6,1,0}            
\definecolor{DarkBlue}{cmyk}{1,1,0,0.20}
\definecolor{myblue}{rgb}{0.66,0.78,1.00}
\definecolor{Violet}{cmyk}{0.79,0.88,0,0}
\definecolor{Lavender}{cmyk}{0,0.48,0,0}
\newtheorem{theorem}{Theorem}[section]
\newtheorem{lemma}[theorem]{Lemma}
\newtheorem{corollary}[theorem]{Corollary}
\theoremstyle{definition}
\newcommand{\bea}{\begin{eqnarray*}}
\newcommand{\eea}{\end{eqnarray*}}
\numberwithin{equation}{section}
\begin{document}

\title[]{A characterization of the ball}
\author[K. Diederich]{K. Diederich}
\address{K. Diederich: Universitet Wuppertal, Mathematik, Gausstrasse 20, 42119 Wuppertal, Germany.} \email{klas@math.uni-wuppertal.de}
\author[J. E. Forn\ae ss]{J. E. Forn\ae ss}
\address{J. E. Forn\ae ss: Department of Mathematical Sciences, Norwegian University of Science and Technology
7491 Trondheim, Norway} \email{john.fornass@math.ntnu.no}
\author[E. F. Wold]{E. F. Wold}
\address{E. F: Wold: Department of Mathematics, University of Oslo, Postboks 1053 Blindern, 0316 Oslo,
 Norway} \email{erlendfw@math.uio.no}
\subjclass[2010]{32A99}
\date{\today}
\keywords{Squeezing function and its boundary behaviour, Implications for characterizing the ball}
\footnote{The second author was supported in part by the Norwegian Research Council grant number 240569 and NSF grant DMS1006294}
\footnote{The third author was supported in part by the Norwegian Research Council grant number 240569}
\maketitle

\begin{abstract}
We study bounded domains with certain smoothness conditions and the properties of their squeezing functions
in order to prove that the domains are biholomorphic to the ball.
 \end{abstract}

\section{Introduction}

Let $\Omega$ be a bounded domain in $\mathbb C^n$. For $z\in \Omega$ let $f_z:\Omega\rightarrow
\mathbb B(0,1)$ be any 1-1 holomorphic map to the unit ball which maps $z$ to the origin.
Let $S_{\Omega,f_z}(z)=\sup \{r>0; \mathbb B(0,r)\subset f(\Omega)\}.$ We define the squeezing
function $S=S_{\Omega}:\Omega \rightarrow (0,1]$ by setting
$S(z)=\sup_{f_z} \{S_{\Omega,f_z}\}.$  See \cite{DengGuanZhang12}, \cite{DengGuanZhang16},
\cite{KimZhang15}, \cite{LiuSunYau04}, \cite{LiuSunYau05}, \cite{Yeung09} and references therein for
results on the squeezing function.

In \cite{FW} Forn\ae ss and Wold proved the following estimate for strongly pseudoconvex domains with smooth
boundary.

\begin{theorem}
Let $\Omega$ be a bounded strongly pseudoconvex domain with $\mathcal C^4$ boundary in $\mathbb C^n$.
Then there exists a constant $C>0$ so that the squeezing function $S_{\Omega}(z)$ satisfies the estimate
$S_{\Omega}(z)\geq 1-Cd(z)$ on $\Omega$ where $d(z)$ denotes the boundary distance.
\end{theorem}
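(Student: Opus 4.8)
The plan is to prove a local estimate near a fixed boundary point and then conclude by compactness. So fix $p \in \partial\Omega$, and choose good local coordinates: by strong pseudoconvexity and the $\mathcal C^4$ smoothness of the boundary, after an affine change of coordinates I may assume $p=0$, that the outer normal at $p$ points in the $-\mathrm{Re}\,z_1$ direction, and that the defining function has the normal form
\[
\rho(z) = -\mathrm{Re}\,z_1 + \sum_{j=1}^n |z_j|^2 + o(|z|^2),
\]
so that near $p$ the domain $\Omega$ is squeezed between two balls: there are radii $0<r<R$ and centers so that
\[
\mathbb B(c_r, r) \subset \Omega \cap U \subset \mathbb B(c_R, R)
\]
on a fixed neighborhood $U$ of $p$. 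The $\mathcal C^4$ hypothesis is what guarantees the error term in $\rho$ is controlled uniformly (the paper likely needs the third/fourth order to make the osculating-ball radii depend continuously on $p$), giving \emph{uniform} inner and outer radii as $p$ ranges over $\partial\Omega$.

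Next I would exploit the inclusion between balls together with the explicit automorphisms of the ball. For a point $z\in\Omega$ close to $p$ with $d(z)=\delta$ small, consider the biholomorphism $f_z:\Omega \to \mathbb B(0,1)$ built in two stages. First map the outer ball $\mathbb B(c_R,R)$ biholomorphically to $\mathbb B(0,1)$ by an affine scaling, and compose with a Möbius automorphism of $\mathbb B(0,1)$ sending the image of $z$ to the origin; call the composite $\Phi_z$. Since $\Omega \subset \mathbb B(c_R,R)$ (at least locally — and one must patch in the part of $\Omega$ away from $p$, where the boundary distance is bounded below and a crude estimate suffices), the map $\Phi_z$ sends $\Omega$ into $\mathbb B(0,1)$. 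The key point is to show that $\Phi_z(\Omega)$ \emph{contains} a ball $\mathbb B(0, 1-C\delta)$. For this I use the inner inclusion $\mathbb B(c_r,r)\subset \Omega$: the image $\Phi_z(\mathbb B(c_r,r))$ is again a ball (automorphisms of $\mathbb B(0,1)$ need not preserve Euclidean balls, but images of inner balls under the composite affine-Möbius map can be estimated), and one tracks how close its boundary comes to $\partial\mathbb B(0,1)$ as a function of $\delta$.

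The heart of the matter is the quantitative Möbius computation. Writing the standard automorphism $\varphi_a$ of $\mathbb B(0,1)$ that sends $a$ to $0$, with $|a| = 1-c\delta$ for some constant $c$ reflecting how the scaled position of $z$ approaches the sphere, I would compute the Euclidean geometry of $\varphi_a(\mathbb B(b,s))$ for the scaled inner ball and show its image fills out $\mathbb B(0,1)$ up to a shell of thickness $O(\delta)$. The Schwarz–Pick/Möbius distortion estimates give $1 - S_{\Omega,f_z}(z) \le C\,\delta$ with $C$ depending only on the uniform radii $r,R$ and the modulus of continuity of the osculating data, hence only on $\Omega$ through its $\mathcal C^4$ norm and the strong pseudoconvexity constant. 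Taking the supremum over choices of $f_z$ only improves the estimate, yielding $S_\Omega(z)\ge 1 - Cd(z)$.

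I expect the main obstacle to be the \emph{uniformity} of the constant $C$, i.e.\ making the osculating inner and outer balls have radii bounded away from $0$ and $\infty$ simultaneously for all boundary points, with an error term in $\rho$ that is uniformly $o(|z|^2)$. This is precisely where the $\mathcal C^4$ (rather than merely $\mathcal C^2$) regularity should enter, controlling the variation of the second-order osculation across $\partial\Omega$; a secondary technical point is patching the local picture near $p$ with the global domain so that $\Phi_z$ maps \emph{all} of $\Omega$ into $\mathbb B(0,1)$ while the inner-ball argument remains local.
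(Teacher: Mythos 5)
A remark on provenance first: the paper does not prove this statement at all --- it is Theorem~1.1, quoted from the reference \cite{FW} --- so the comparison below is with the argument of that paper rather than with anything in the present source.

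Your local picture is reasonable, but there is a genuine gap at the step you describe as ``patching.'' The squeezing function at $z$ is defined via a \emph{single} injective holomorphic map $f_z:\Omega\to\mathbb B(0,1)$ defined on \emph{all} of $\Omega$, so a local competitor near $p$ together with ``a crude estimate'' on the rest of the domain produces nothing: your map $\Phi_z$ (affine normalization of the outer osculating ball followed by a M\"obius automorphism) only sends $\Omega\cap U$ into the unit ball, and whatever part of $\Omega$ lies outside $\mathbb B(c_R,R)$ is sent outside $\mathbb B(0,1)$. There is no routine way to modify $\Phi_z$ off $U$ without destroying either injectivity or the osculation at $p$. A second, related problem is that the normal form $\rho=-\mathrm{Re}\,z_1+\sum_j|z_j|^2+o(|z|^2)$ is achieved only after a \emph{local holomorphic} change of coordinates absorbing the term $\mathrm{Re}\sum\rho_{jk}z_jz_k$; a strongly pseudoconvex boundary point need not admit an outer tangent Euclidean ball in the original coordinates, so the inclusion $\Omega\cap U\subset\mathbb B(c_R,R)$ already presupposes a local biholomorphism, which again is not defined on all of $\Omega$.

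This globalization is precisely the hard content of the actual proof in \cite{FW}. The key input there is the exposing-points theorem of Diederich--Forn\ae ss--Wold: for each $p\in\partial\Omega$ there is an injective holomorphic map $F_p$ on a neighborhood of $\overline\Omega$ with $F_p(\Omega)\subset\mathbb B(0,1)$, $F_p(p)\in\partial\mathbb B(0,1)$, and $\partial F_p(\Omega)$ osculating the unit sphere at $F_p(p)$ to second order, with constants uniform in $p$. Given such a global osculating embedding, your M\"obius endgame (compose with $\varphi_a$, $|a|=1-c\delta$, and check that the image of an inner tangent ball fills $\mathbb B(0,1)$ up to a shell of thickness $O(\delta)$) is indeed how one concludes; and the $\mathcal C^4$ hypothesis enters in making these exposing maps and their osculation uniform, not merely in controlling the $o(|z|^2)$ error of the defining function, which $\mathcal C^2$ already gives by compactness. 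So the final computation in your proposal matches the endgame of \cite{FW}, but the construction of a globally injective map into the ball that osculates the sphere at the chosen boundary point is missing, and it is the heart of the theorem.
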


Here we show that this estimate is sharp: Recall that the squeezing function of the unit ball is identically equal to $1.$ In fact if the squeezing function has the value one at at least one point, then the domain is known to be biholomorphic to the ball.

\begin{theorem}
Let $\Omega$ be a bounded  domain with $\mathcal C^2$ boundary in $\mathbb C^n$.
Suppose there does not exist a constant $c>0$ so that the squeezing function $S_{\Omega}(z)$ satisfies the estimate
$S_{\Omega}(z)\leq 1-cd(z)$ on $\Omega$. Then $\Omega$ is biholomorphic to the ball.
\end{theorem}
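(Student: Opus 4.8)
The plan is to show that the hypothesis forces $S_\Omega(q)=1$ at some interior point $q$, which by the fact recalled just above the statement (the squeezing function attaining the value $1$ at a point characterizes the ball) immediately finishes the proof. Writing $k_\Omega$ for the Kobayashi distance, the whole argument rests on the interplay of three scales: the boundary distance $d(z_j)$, the defect $1-S_\Omega(z_j)$ of the squeezing function, and the defect $1-|f_j(q)|$ produced when a fixed interior point is carried near the sphere by a near-extremal squeezing map. Notably, pseudoconvexity is never needed.

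\textbf{Reduction and set-up.} Negating the estimate in the statement yields a sequence $z_j\in\Omega$ with $(1-S_\Omega(z_j))/d(z_j)\to 0$. Fix once and for all a point $q\in\Omega$. For each $j$ I would choose a near-extremal embedding $f_j\colon\Omega\to\B(0,1)$ with $f_j(z_j)=0$ and $\B(0,r_j)\subset f_j(\Omega)\subset\B(0,1)$, where $r_j\ge S_\Omega(z_j)-\epsilon_j$; taking $\epsilon_j=o(d(z_j))$ gives $1-r_j=o(d(z_j))$. Put $w_j:=f_j(q)$ and let $\tau_j\in\Aut(\B(0,1))$ satisfy $\tau_j(w_j)=0$, so that $\tilde f_j:=\tau_j\circ f_j$ is again an injective holomorphic map $\Omega\to\B(0,1)$, now normalized by $\tilde f_j(q)=0$. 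I would then prove that $\tilde f_j(\Omega)$ contains a ball $\B(0,\rho_j)$ with $\rho_j\to1$; since such a map witnesses $S_\Omega(q)\ge\rho_j$, this forces $S_\Omega(q)=1$.

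\textbf{Key estimate.} The crux is to compare $1-r_j$ with $1-|w_j|$. By the interior ball condition coming from the $\mathcal C^2$ smoothness of $\partial\Omega$ (with a radius uniform in the boundary point, by compactness of $\partial\Omega$), the point $z_j$ lies at Euclidean distance $d(z_j)$ from the boundary of a fixed-radius ball $B_j\subset\Omega$ tangent at the nearest boundary point; computing the Kobayashi distance inside $B_j$ gives $k_\Omega(q,z_j)\le\tfrac12\log(1/d(z_j))+C$. The decreasing property of $k$ under $f_j$ then yields $k_{\B(0,1)}(0,w_j)\le k_\Omega(q,z_j)$, and since $k_{\B(0,1)}(0,w_j)\ge\tfrac12\log\tfrac1{1-|w_j|}$ we obtain $1-|w_j|\ge c\,d(z_j)$ for some $c>0$. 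Combined with $1-r_j=o(d(z_j))$ this is exactly
\[
\frac{1-r_j}{1-|w_j|}\longrightarrow 0 .
\]
Here the $\mathcal C^2$ hypothesis and the superlinear hypothesis are used in tandem: the interior ball condition produces the sharp constant $\tfrac12$, and only that constant allows the superlinear gain $o(d(z_j))$ to beat the genuine growth rate $\tfrac12\log(1/d(z_j))$ of $k_\Omega(q,z_j)$.

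\textbf{Filling the ball and conclusion.} Finally I would verify that $\tau_j(\B(0,r_j))\supset\B(0,\rho_j)$ with $\rho_j\to1$. After a unitary normalization we may take $w_j=|w_j|\,e_1$; on the critical complex line the restriction of $\tau_j$ is the Möbius map $\zeta\mapsto(|w_j|-\zeta)/(1-|w_j|\zeta)$, and the displayed limit shows that the images of the extreme points $\pm r_j$ both tend to $\partial\B(0,1)$, so the image domain exhausts $\B(0,1)$ along this line; the transverse directions are stretched less sharply and cause no difficulty. Hence $\rho_j\to1$, giving $S_\Omega(q)=1$ and therefore $\Omega$ biholomorphic to the ball. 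The delicate point of the scheme, which I would treat most carefully, is precisely the balancing of scales in the Key estimate, together with the verification that the image $\tau_j(\B(0,r_j))$ fills $\B(0,1)$ in \emph{every} direction and not merely along the critical complex line.
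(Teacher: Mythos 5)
Your proposal is correct and follows essentially the same route as the paper: the upper bound $k_\Omega(q,z_j)\leq \frac{1}{2}\log(1/d(z_j))+C$ from the $\mathcal C^2$ interior ball condition, the distance-decreasing property giving $1-|f_j(q)|\geq c\,d(z_j)$, and the recentering automorphism under which $\mathbb B(0,r_j)$ with $1-r_j=o(d(z_j))$ fills the unit ball, yielding $S_\Omega(q)=1$. The only difference is presentational: the paper settles the transverse directions you flag as delicate by an explicit computation of $\|\Psi_i(z)\|^2$ on the sphere $\|z\|=1-2\epsilon_i d(p_i)$, using the identity $1-\|\Psi_i(z)\|^2=(1-r^2)(1-\|z\|^2)/|1-z_1r|^2$.
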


In the second section, we prove Theorem 1.2. In the third section we show that the theorem fails for domains with 
only $\mathcal C^{1}$ boundary.

\section{Proof of the Theorem}

Theorem 1.3 is equivalent to the following result:\\

\begin{theorem}
Let $\Omega$ be a bounded  domain with $\mathcal C^2$  boundary. Suppose there is a sequence of points $p_i$ approaching the boundary so that the squeezing function $S(p_i)\geq 1-\epsilon_id(p_i), \epsilon_i\rightarrow 0$. Then $\Omega$ is biholomorphic to the ball.
\end{theorem}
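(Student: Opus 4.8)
The plan is to produce the biholomorphism to the ball as a normal-family limit of renormalized near-optimal squeezing maps, using the rate $\epsilon_i\to 0$ to make the defect of each $f_i$ disappear in the limit. Fix $z_0\in\Omega$, write $d_i:=d(p_i)$, and for each $i$ pick an injective holomorphic $f_i\colon\Omega\to\B(0,1)$ with $f_i(p_i)=0$ and $\B(0,r_i)\subset\Omega_i:=f_i(\Omega)\subset\B(0,1)$, where $r_i=1-\delta_i$ with $\delta_i\le\epsilon_i' d_i$ and $\epsilon_i'\to 0$ (a map nearly attaining the supremum defining $S(p_i)$). Let $\phi_i\in\Aut(\B(0,1))$ satisfy $\phi_i(f_i(z_0))=0$ and put $h_i:=\phi_i\circ f_i$, so that $h_i(z_0)=0$ and $h_i(\Omega)=\phi_i(\Omega_i)\subset\B(0,1)$. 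By Montel's theorem a subsequence of $(h_i)$ converges locally uniformly to some $h\colon\Omega\to\overline{\B(0,1)}$, and everything reduces to proving that $h$ is a biholomorphism onto $\B(0,1)$.

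The boundary regularity enters only through two Kobayashi estimates. Since $\partial\Omega$ is $\cC^2$ and compact, $\Omega$ satisfies a uniform interior ball condition, giving $d^K_\Omega(z_0,p_i)\le\tfrac12\log(1/d_i)+C$; as $f_i$ is a biholomorphism and $\Omega_i\subset\B(0,1)$, the distance-decreasing property together with the explicit ball distance yields
\[
1-|f_i(z_0)|\ \ge\ c\,d_i
\]
for a fixed $c>0$. The \emph{decisive step} is now: for every compact $K\subset\B(0,1)$ one has $K\subset\phi_i(r_i\B(0,1))\subset h_i(\Omega)$ for all large $i$. Indeed, with $R:=\sup_{w\in K}d^K_{\B(0,1)}(0,w)$ and $\phi_i^{-1}$ a ball isometry sending $0$ to $f_i(z_0)$, each $\zeta\in\phi_i^{-1}(K)$ satisfies $d^K_{\B(0,1)}(0,\zeta)\le d^K_{\B(0,1)}(0,f_i(z_0))+R$, whence $1-|\zeta|\ge c''e^{-2R}d_i$; since $\delta_i\le\epsilon_i' d_i$ with $\epsilon_i'\to 0$, for large $i$ this forces $1-|\zeta|>\delta_i$, i.e.\ $\zeta\in r_i\B(0,1)$. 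This is exactly where the hypothesis is used: the defect shell $\B(0,1)\setminus r_i\B(0,1)$ has thickness $\delta_i$ negligible against the magnification scale $d_i$ of $\phi_i$, so it is pushed out to $\partial\B(0,1)$ in the limit.

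Two consequences finish the argument. First, taking $K=\overline{\B(0,\tfrac12)}$ shows $h_i(\Omega)\supset\overline{\B(0,\tfrac12)}$ for large $i$; combined with the injectivity of $h_i$ and with $\Omega\supset\B(z_0,s_0)$ this bounds $h_i'(z_0)$ from above and below, so $h'(z_0)$ is invertible and, by the vector-valued Hurwitz theorem, the limit $h$ of the injective maps $h_i$ is itself injective, hence biholomorphic onto the open set $h(\Omega)\subset\B(0,1)$. Second, for surjectivity fix $w\in\B(0,1)$ and set $\zeta_i:=h_i^{-1}(w)=f_i^{-1}(\phi_i^{-1}(w))$. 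A short computation with the dilation $r_i^{-1}$ shows $d^K_{\Omega_i}(f_i(z_0),\phi_i^{-1}(w))\le R+o(1)$, so $d^K_\Omega(z_0,\zeta_i)=d^K_{\Omega_i}(f_i(z_0),\phi_i^{-1}(w))$ stays bounded; and because $\partial\Omega$ is $\cC^2$ it admits a uniform exterior ball, so the slice of $\Omega$ by the complex normal line lies in the complement of a disk, which is a punctured disk, forcing $K_\Omega(z;\nu)\ge c_0/d(z)$ and hence $d^K_\Omega(z_0,\cdot)\to\infty$ at $\partial\Omega$. Thus $\Omega$ is complete hyperbolic, the $\zeta_i$ remain in a fixed relatively compact Kobayashi ball, and a convergent subsequence $\zeta_i\to\zeta\in\Omega$ gives $h(\zeta)=w$. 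Therefore $h(\Omega)=\B(0,1)$ and $h$ is the desired biholomorphism.

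I expect the main obstacle to be surjectivity, i.e.\ showing the limit map is onto and proper rather than merely an embedding into $\B(0,1)$. This is where one must control the preimages $\zeta_i$ and invoke completeness of the Kobayashi metric for a general, not necessarily pseudoconvex, bounded $\cC^2$ domain; the exterior-ball/punctured-disk estimate giving $K_\Omega(z;\nu)\ge c_0/d(z)$ is the crucial input, and it must be set up uniformly as $p_i\to\partial\Omega$. By contrast the holomorphic soft analysis (normal families, Hurwitz) and the shell-thinness computation driving the decisive step are comparatively routine once the rate $\epsilon_i\to 0$ is in hand.
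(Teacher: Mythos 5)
Your renormalization scheme coincides in substance with the paper's: the interior-ball estimate $d_{K,\Omega}(z_0,p_i)\le\tfrac12\log(1/d_i)+C$, its consequence $1-|f_i(z_0)|\ge e^{-2C}d_i$, and the ``decisive step'' showing that $h_i(\Omega)$ eventually contains any compact subset of $\B(0,1)$ are exactly the content of Lemmas 2.2--2.5 (you replace the explicit M\"obius computation for $\Psi_i$ by the isometry property of $\phi_i$, which is equivalent and arguably cleaner). At that point the paper simply concludes $S(z_0)=1$ and invokes the known theorem of Deng--Guan--Zhang, quoted in the introduction, that a domain whose squeezing function equals $1$ at one point is biholomorphic to the ball. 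You instead re-prove that theorem by normal families, and that is where the gap lies.

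The surjectivity step is broken. You assert $K_\Omega(z;\nu)\ge c_0/d(z)$ because the slice $\Omega\cap L$ by the complex normal line sits in the complement of the exterior tangent disc. But the monotonicity of the Kobayashi metric goes the other way: from $\Omega\cap L\subset\Omega$ one gets $K_{\Omega\cap L}(z;\nu)\ge K_\Omega(z;\nu)$, so hyperbolicity of the slice gives no lower bound on $K_\Omega$. A genuine lower bound would require a holomorphic map from all of $\Omega$ into a hyperbolic plane domain (a projection, as for convex domains), and none is available here; note that the complement of a ball in $\C^n$, $n\ge 2$, contains complex lines and is not hyperbolic at all. Since the theorem assumes no pseudoconvexity, complete hyperbolicity of a bounded $\cC^2$ domain is not at your disposal, and the conclusion that the preimages $\zeta_i=h_i^{-1}(w)$ stay in a compact subset of $\Omega$ collapses; their Kobayashi distance to $z_0$ being bounded does not by itself prevent them from drifting to $\partial\Omega$. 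The repair needs no boundary analysis: either quote the cited theorem, or run Montel on the inverse maps $h_i^{-1}$ restricted to the exhausting balls $\B(0,\rho_i)$, $\rho_i\to 1$; a subsequential limit $\sigma:\B(0,1)\to\overline{\Omega}$ with $\det\sigma'(0)=\lim\det\bigl(h_i'(z_0)\bigr)^{-1}\ne 0$ satisfies $h\circ\sigma=\mathrm{id}$, which forces $h$ to be onto. The remainder of your write-up (Montel, the two-sided Cauchy estimates on $h_i'(z_0)$, the Hurwitz argument for injectivity) is correct.
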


\begin{proof}

Say $0\in \Omega.$
Let $\Phi_i:\Omega\rightarrow \mathbb B(0,1)$ be 1-1 holomorphic maps  so that $\Phi_i(p_i)=0$ and the image
contains the ball of radius $S(p_i).$

We collect some lemmas. 

\begin{lemma} 
If $\Omega$ is a bounded  domain with $\mathcal C^2$ boundary, then
there is a constant $C$ so that the Kobayashi distance from $0$ to $p_i$ satisfies the
estimate $d_{K,\Omega}(0,p)\leq \frac{1}{2}\log\frac{1}{ d(p)}+C$
\end{lemma}

We prove this by choosing a curve from $0$ to $p_i$ which ends as a straight normal line at $p_i.$ Then we compare the infinitesimal Kobayashi metric on $\Omega$ with the metric on the intersection with the complex normal line.

Recall the Kobayashi distance on the unit ball:

$$
d_{K,\mathbb B(0,1)}(0,z) = \frac{1}{2}\log \frac{1+\|z\|}{1-\|z\|}.
$$



\begin{lemma}
For points $z\in B(0,1-\epsilon_id(p_i)),$ we have that 
$$
\frac{1}{2}\log \frac{1+\|z\|}{1-\|z\|}\leq d_{K,\Phi_i(\Omega)}(0,z)
$$
\end{lemma}

\begin{lemma}
For all $i, \Phi_i(0)\in \mathbb B(0,1-d(p_i)/e^{2C}).$
\end{lemma}

\begin{proof}
Let $\|z\|=1-d(p_i)/e^{2C}.$

Then 
\bea
d_{K,\Phi_i(\Omega)}(0,z) & \geq & \frac{1}{2}\log \frac{1+\|z\|}{1-\|z\|}\\
& = & \frac{1}{2}\log \frac{1+(1-\frac{d(p_i)}{e^{2C}})}{1-(1-\frac{d(p_i)}{e^{2C}})}\\
& = & \frac{1}{2}\log \frac{2-\frac{d(p_i)}{e^{2C}}}{\frac{d(p_i)}{e^{2C}}}\\
& > & \frac{1}{2}\log \frac{e^{2C}}{d(p_i)}\\
& = & \frac{1}{2}\log \frac{1}{d(p_i)}+C\\
& \geq & d_{K,\Omega}(0,p_i)\\
& = & d_{K,\Phi_i(\Omega)}(0,\Phi_i(0))\\
\eea

Hence any path connecting $0$ to $\Phi_i(0)$ which passes through a point on the boundary of the ball
$\mathbb B(0,1-d(p_i)/e^{2C}$ is too long compared to the Kobayashi distance from $0$ to $\Phi_i(0).$

\end{proof}

We can assume that $\Phi_i(0)=(r,0,\dots,0), 0\leq r<1-d(p_i)/C.$ Define
$$
\Psi_i(z_1,\dots,z_n)=\left(\frac{z_1-r}{1-z_1r},\frac{\sqrt{1-r^2}z_2}{1-z_1r},\dots,
\frac{\sqrt{1-r^2}z_n}{1-z_1r}\right).
$$
Then $\Psi_i$ is an automorphism of the unit ball and the map $F_i=\Psi_i\circ \Phi_i$ is a 1-1 holomorphic
map on $\Omega$ into the unit ball which maps $0$ to $0.$ 

\begin{lemma}
$F_i(\Omega)\supset \mathbb B(0,1-6C\epsilon_i).$ 
\end{lemma}

\begin{proof}
We know that $\Phi_i(\Omega)\supset \mathbb B(0,1-\epsilon_id(p_i)).$ To prove the lemma it suffices
to prove that if $\|z\|=1-2\epsilon_i d(p_i),$ then $\|\Psi_i(z)\|\geq 1-6C\epsilon_i.$
Suppose that $\|z\|=1-2\epsilon_id(p_i).$ Then

\bea
\|\Psi_i(z)\|^2 & = & \frac{(z_1-r)(\overline{z}_1-r)+(1-r^2)(|z_2|^2+\cdots +|z_n|^2)}{|1-z_1r|^2}\\
 & = & \frac{(z_1-r)(\overline{z}_1-r)+(1-r^2)(1-2\epsilon_i d(p_i)^2-|z_1|^2)}{|1-z_1r|^2}\\
& = & \frac{(z_1-r)(\overline{z}_1-r)+(1-r^2)(1-|z_1|^2)}{|1-z_1r|^2}\\
& + & \frac{(1-r^2)(-4\epsilon_i d(p_i)-4\epsilon_i^2 d^2(p_i))}{|1-z_1r|^2}\\
& = & 1-\frac{(1-r^2)(4\epsilon_i d(p_i)+4\epsilon_i^2 d^2(p_i))}{|1-z_1r|^2}\\
& \geq & 1-\frac{(1-r^2)(5\epsilon_i d(p_i))}{(1-r)^2}\\
& \geq & 1-\frac{10\epsilon_i d(p_i)}{1-r}\\
& \geq & 1-\frac{10C\epsilon_i d(p_i)}{d(p_i)}\\
& = & 1-10C\epsilon_i\\
&\Rightarrow & \\
\|\Psi_i(z)\| & \geq & 1-6C\epsilon_i\\
\eea

\end{proof}

\begin{corollary}
$S(0)=1$
\end{corollary}

\begin{corollary}
$\Omega$ is biholomorphic to the unit ball.
\end{corollary}

\end{proof}

\section{An example}

Let $\Omega'$ be a $\mathcal C^\infty$ domain in the right half plane where the boundary contains an interval
$(-i,i)$ on the imaginary axis and which is a topological annulus.
We define $\Omega=\Phi(\Omega')$ there $\Phi(z)=z\log z.$ The squeezing function on $\Omega'$ satisfies
the estimate $S(z)\geq 1-Cd(z)$ since $\Omega'$ is strongly pseudoconvex. The  squeezing function is a biholomorphic invariant and the derivative of $\Phi$ goes to zero when we approach the origin. Hence
the squeezing function of $\Omega$ will not satisfy the estimate $S_{\Omega}\leq 1-cd$ for any $c>0.$
However, the domain is a toplogical annulus so cannot be biholomorphic to the ball. This shows that Theorem 1.2
fails if we only assume  that the boundary is $\mathcal C^1.$


\begin{thebibliography}{}

  \bibitem[DGZ1]{DengGuanZhang12} F. Deng, Q. Guan and L. Zhang, {\it Some properties of squeezing functions on bounded domains},  Pacific J. Math. {\bf 257} (2012), 319 - 341.
  \bibitem[DGZ2]{DengGuanZhang16} F. Deng, Q. Guan and L. Zhang, {\it Properties of squeezing functions and global transformations of bounded domains},  Trans. Amer. Math. Soc. {\bf 368} (2016), 2679 - 2696.  
\bibitem[FW]{FW}
J. E. Forn\ae ss., E. F. Wold.; An estimate for  the squeezing function and estimates of invariant metrics,
arXiv:1411.3846 (2014)
  \bibitem[KZ]{KimZhang15} K.-T. Kim and L. Zhang, {\it On the uniform squeezing property of convex domains in  $\mathbb C^n$}, Pacif. J. Math., {\bf 282} (2016), 341 - 358.
  \bibitem[LSY1]{LiuSunYau04}  K. Liu, X. Sun, and S.-T. Yau,  {\it Canonical metrics on the moduli space of Riemann surfaces, I},  J. Differential Geom. {\bf 68} (2004), 571 - 637.
  \bibitem[LSY2]{LiuSunYau05}  K. Liu, X. Sun, and S.-T. Yau,  {\it Canonical metrics on the moduli space of Riemann surfaces, II},  J. Differential Geom. {\bf 69} (2005), 163 - 216.
  \bibitem[Ye]{Yeung09} S.-K. Yeung, {\it Geometry of domains with the uniform squeezing property}, Adv. Math. {\bf 221} (2009), 547 - 569.
\end{thebibliography}
\end{document}